\definecolor{uuuuuu}{rgb}{0.26666666666666666,0.26666666666666666,0.26666666666666666}
\definecolor{xdxdff}{rgb}{0.49019607843137253,0.49019607843137253,1.}
\definecolor{ffqqqq}{rgb}{1.,0.,0.}
\definecolor{ffqqqq}{rgb}{1.,0.,0.}
\definecolor{ffxfqq}{rgb}{1.,0.4980392156862745,0.}
\definecolor{xdxdff}{rgb}{0.49019607843137253,0.49019607843137253,1.}
\definecolor{ffqqqq}{rgb}{1.,0.,0.}
\definecolor{ududff}{rgb}{0.30196078431372547,0.30196078431372547,1.}
\definecolor{uuuuuu}{rgb}{0.26666666666666666,0.26666666666666666,0.26666666666666666}
\definecolor{qqwuqq}{rgb}{0.,0.39215686274509803,0.}
\definecolor{zzttqq}{rgb}{0.6,0.2,0.}
\definecolor{xdxdff}{rgb}{0.49019607843137253,0.49019607843137253,1.}
\definecolor{qqqqff}{rgb}{0.,0.,1.}
\definecolor{cqcqcq}{rgb}{0.7529411764705882,0.7529411764705882,0.7529411764705882}
\definecolor{sqsqsq}{rgb}{0.12549019607843137,0.12549019607843137,0.12549019607843137}
\theoremstyle{plain}
\newtheorem{theorem}[subsection]{Theorem}
\newtheorem{prop}[subsection]{Proposition}
\theoremstyle{definition}
\newtheorem{remark}[subsection]{Remark}
\newcommand{\uu}{\cup}
\newcommand{\UU}{\bigcup}
\newcommand{\sci}{\subset}
\newcommand{\set}[1]{\{#1\}}
\newcommand{\ga}{\alpha}
\newcommand{\tit}{\textit}
\newcommand{\D}[1]{\mathbb{#1}}
\newcommand{\te}{\text}
\newcommand{\pa}{\partial}
\newcommand{\tl}{\tilde}
\begin{document}

To appear,  Statistics \& Probability Letters
\title{Quantization coefficients for uniform distributions on the boundaries of regular polygons}


\address{School of Mathematical and Statistical Sciences\\
University of Texas Rio Grande Valley\\
1201 West University Drive\\
Edinburg, TX 78539-2999, USA.}

\email{\{$^1$joel.hansen01, $^2$itzamar.marquez01, $^3$mrinal.roychowdhury, $^4$eduardo.torres05\}\linebreak@utrgv.edu}

\author{$^1$Joel Hansen}
\author{$^2$Itzamar Marquez}
\author{$^3$Mrinal K. Roychowdhury}
\author{$^4$Eduardo Torres}

\subjclass[2010]{60Exx, 94A34.}
\keywords{Uniform distribution, optimal sets, quantization error, quantization coefficient, regular polygon}

\date{}
\maketitle

\pagestyle{myheadings}\markboth{J. Hansen, I. Marquez, M.K. Roychowdhury, and E. Torres}
{Quantization coefficients for uniform distributions on the boundaries of regular polygons}

\begin{abstract}
In this paper, we give a general formula to determine the quantization coefficients for uniform distributions defined on the boundaries of different regular $m$-sided polygons inscribed in a circle. The result shows that the quantization coefficient for the uniform distribution on the boundary of a regular $m$-sided polygon inscribed in a circle is an increasing function of $m$, and approaches to the quantization coefficient for the uniform distribution on the circle as $m$ tends to infinity.
\end{abstract}

\section{Introduction}

Let $P$ be a Borel probability measure on the $d$-dimensional Euclidean space $\D R^d$ equipped with the Euclidean norm $\|\cdot\|$, where $d\geq 1$. For a finite set $\ga\sci \D R^d$, the \tit{cost} or \tit{distortion error} for $P$ with respect to the set $\ga$, denoted by $V(P; \ga)$, is defined by
\[V(P; \ga):= \int \min_{a\in\alpha} \|x-a\|^2 dP(x).\]
Then, for $n\in \D N$, the $n$th quantization error for $P$, denoted by $V_n:=V_n(P)$, is defined by
\begin{equation*} \label{eq0} V_n:=V_n(P)=\inf \Big\{V(P; \ga) : \ga\sci \D R^d, 1\leq \text{ card}(\ga) \leq n \Big\}.\end{equation*}
A set $\ga$ for which the infimum is achieved and contains no more than $n$ points is called an \textit{optimal set of $n$-means}. It is well-known that for a continuous probability measure an optimal set of $n$-means always contains exactly $n$ elements. If $P$ is the probability distribution, then an optimal set of $n$-means is denoted by $\ga_n:=\ga_n(P)$. Optimal sets of $n$-means for different probability distributions were determined by several authors, for example, see \cite{CR1, CR2, DR1, DR2, GL2, L, R1, R2, R3, R4, R5, R6, RR1, RS}. It has broad applications in engineering and technology (see \cite{GG, GN, Z}). For any $s\in (0, +\infty)$, the number \[\lim_{n\to \infty} n^{\frac 2 s} V_n(P),\]
if it exists, is called the $s$-dimensional \tit{quantization coefficient} for $P$. Bucklew and Wise (see \cite{BW}) showed that for a Borel probability measure $P$ with non-vanishing absolutely continuous part the quantization coefficient exists as a finite positive number.
For some more details interested readers can also see \cite{GL1, P}.
Let $E(X)$ represent the expected value of a random variable $X$ associated with a probability distribution $P$. Let $\ga$ be an optimal set of $n$-means for $P$, and $a\in \ga$. Then, it is well-known that
$a=E(X : X \in M(a|\ga)),$
where $M(a|\ga)$ is the Voronoi region of $a\in \ga , $ i.e.,  $M(a|\ga)$ is the set of all elements $x$ in $\D R^d$ which are closest to $a$ among all the elements in $\ga$ (see \cite{GG, GL1}).

From the work of Rosenblatt and Roychowdhury (see \cite{RR2}), it is known that the quantization coefficient for the uniform distribution on a unit circle is $\frac {\pi^2}3$; on the other hand, from the work of Pena et al. (see \cite{PRRSS}), it is known that the quantization coefficient for the uniform distribution on the boundary of a regular hexagon inscribed in a unit circle is $3$. Notice that a regular $m$-sided polygon inscribed in a circle tends to the circle as $m$ tends to infinity. Pena et al. conjectured that the quantization coefficient for the uniform distribution on the boundary of a regular $m$-sided polygon inscribed in a circle is an increasing function of $m$ (see \cite{PRRSS}), and approaches to the quantization coefficient for the uniform distribution on the circle as $m$ tends to infinity. In this paper, we prove that the conjecture is true.

The arrangement of the paper is as follows: First, we prove a theorem Theorem~\ref{theo00}, which gives a technique how to calculate the optimal sets of $n$-means and the $n$th quantization errors for all positive integers $n$ for a uniform distribution defined on any line segment. Next, let $P$ be the uniform distribution defined on the boundary of a regular $m$-sided polygon inscribed in a unit circle. In Proposition~\ref{prop22}, for $k\geq 2$, we determine the optimal set of $mk$-means and the $mk$th quantization error for the probability distribution $P$. Then, with the help of the proposition, in Theorem~\ref{theo01}, we have shown that the quantization coefficient for $P$ exists, and equals $\frac{1}{3} m^2 \sin ^2\left(\frac{\pi }{m}\right)$, i.e.,
\[\lim_{n\to \infty} n^2 V_n(P)=\frac{1}{3} m^2 \sin ^2\left(\frac{\pi }{m}\right).\]
Notice that $\frac{1}{3} m^2 \sin ^2\left(\frac{\pi }{m}\right)$ is an increasing function of $m$, and $\lim\limits_{m\to \infty} \frac{1}{3} m^2 \sin ^2\left(\frac{\pi }{m}\right)=\frac{\pi^2}{3},$
which is the quantization coefficient for a uniform distribution on the unit circle (see \cite{RR2}). Thus, the result in this paper, shows that the conjecture given by Pena et al. in \cite{PRRSS} is true.

\section{Main result}

In this section, first we give some basic definitions.


By the position vector of a point $A$ with coordinates $(a_1, a_1)$ it is meant that $\overrightarrow{OA}=(a_1, a_2):=a_1i+a_2j$, where $i$ and $j$ are two unit vectors in the positive directions of the $x_1$- and $x_2$-axes, respectively. The squared Euclidean distance between the two points $a:=(a_1, a_2)$ and $b:=(b_1, b_2)$ is denoted by $\rho(a, b)$, i.e.,  $\rho(a, b):=\|(a_1, a_2)-(b_1, b_2)\|^2=(a_1-b_1)^2 +(a_2-b_2)^2$. Let $D$ be the boundary of the Voronoi regions of two points $A$ and $B$ belonged to an optimal set of $n$-means for some positive integer $n$. Then, as we know that the boundary of the Voronoi regions of any two points is the perpendicular bisector of the line segment joining the points, we have
$|\overrightarrow{DA}|=|\overrightarrow{DB}|, \te{ i.e., } (\overrightarrow{DA})^2=(\overrightarrow{DB})^2$ implying
$(  a-  d)^2=(  b-  d)^2$, i.e., $\rho(  d,   a)-\rho(  d,   b)=0$, where $a, b, d$ are, respectively, the position vectors of the points $A, B, D$. We call such an equation a \tit{canonical equation}.

Let us now give the following theorem.

\begin{theorem} \label{theo00}
Let $AB$ be a line segment joining the two points $A$ and $B$ given by the position vectors $a:=(a_1, a_2)$ and $b:=(b_1, b_2)$, respectively. Let $\mu$ be a uniform distribution on $AB$. Let $M(t)$ be the parametric representation of $AB$ for $0\leq t\leq 1$ such that $M(0)=a$, and $M(1)=b$. Let $D_1$ and $D_2$ be two points on the segment $AB$ at distances $r_1$ and $r_2$ from $A$ and $B$, respectively (see Figure~\ref{Fig1}). Then, for each $n\in \D N$, the optimal set of $n$-means for $\mu$ on the segment $D_1D_2$ is given by
\[\ga_n(\mu,  \, D_1D_2):=\Big\{M(\frac {r_1}{\ell}+\frac {2j-1}{2n} (1-\frac{r_2}{\ell}-\frac{r_1}{\ell})) : 1\leq j\leq n \Big\},\]
with the  $n$th quantization error for $\mu$ on the segment $D_1D_2$,
\[V_n(\mu, \, D_1D_2):=n \int_{\frac {r_1}{\ell}}^{\frac {r_1}{\ell}+\frac {1}{n} (1-\frac{r_2}{\ell}-\frac{r_1}{\ell})}\rho\Big(M(t), M(\frac {r_1}{\ell}+\frac {1}{2n} (1-\frac{r_2}{\ell}-\frac{r_1}{\ell}))\Big) d\mu,\]
where $\ell$ is the length of the line segment $AB$.
\end{theorem}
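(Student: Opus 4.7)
The plan is to reduce the problem to one-dimensional quantization of the uniform distribution on an interval via the natural affine identification between the segment $D_1D_2$ and a real interval. Since the parametrization $M(t)=a+t(b-a)$ is affine and the sub-segment $D_1D_2$ corresponds to the parameter range $t\in[r_1/\ell,\,1-r_2/\ell]$, the uniform measure $\mu$ on $D_1D_2$ pulls back to (a normalized multiple of) Lebesgue measure on $[r_1/\ell,\,1-r_2/\ell]$, and the squared Euclidean distance $\rho(M(s),M(t))=\ell^2(s-t)^2$ is just the squared Euclidean distance on the parameter line rescaled by $\ell^2$. Consequently, finding the optimal set of $n$-means on $D_1D_2$ is equivalent to finding the optimal set of $n$-means for the uniform distribution on the interval $[r_1/\ell,\,1-r_2/\ell]$ in $\mathbb{R}$.

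Next, I would invoke the standard one-dimensional fact that the optimal $n$-quantizer for a uniform distribution on any interval $[u,v]\subset\mathbb{R}$ consists of the midpoints of the $n$ equal subintervals of $[u,v]$, i.e., the points $u+\tfrac{2j-1}{2n}(v-u)$ for $1\leq j\leq n$. If this is not available as a cited fact, I would give a short self-contained argument: a candidate set $\{c_1<\dots<c_n\}\subset[u,v]$ must, by the centroid condition, satisfy that each $c_j$ is the midpoint of its Voronoi region, and the Voronoi boundaries are $(c_j+c_{j+1})/2$. Setting the derivative of the distortion with respect to each $c_j$ to zero (equivalently, the centroid condition) together with the boundary conditions $c_0-\text{boundary}=u$ and $c_{n+1}-\text{boundary}=v$ forces a linear recurrence whose only solution is the equispaced configuration. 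Alternatively, one can use the convexity of $V_n$ as a function of the (sorted) centers under the equispaced-Voronoi constraint.

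Pulling back through $M$, the $j$-th optimal center becomes $M\!\bigl(\tfrac{r_1}{\ell}+\tfrac{2j-1}{2n}(1-\tfrac{r_2}{\ell}-\tfrac{r_1}{\ell})\bigr)$, which is precisely the formula stated for $\ga_n(\mu,D_1D_2)$. For the quantization error, I would observe that by translation-equivariance along the segment, every Voronoi region of $\ga_n(\mu,D_1D_2)$ has the same $\mu$-mass and the same integrated squared distance to its representative; hence the total error is $n$ times the contribution from the first subinterval $[\tfrac{r_1}{\ell},\tfrac{r_1}{\ell}+\tfrac{1}{n}(1-\tfrac{r_2}{\ell}-\tfrac{r_1}{\ell})]$, which is exactly the displayed integral for $V_n(\mu,D_1D_2)$.

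The main obstacle, and the only nontrivial point, is the uniqueness/optimality of the equispaced configuration for the one-dimensional uniform distribution: one has to rule out competing partitions of the interval into unequal Voronoi cells. This is handled in the step above via the centroid condition combined with strict convexity of the quadratic cost within each cell, which forces all cells to have equal length. Everything else is a clean transport-of-structure argument using that $M$ is an affine isometry (up to the constant scaling by $\ell$) from the parameter interval to the segment.
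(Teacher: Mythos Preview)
Your proposal is correct and follows essentially the same route as the paper: identify $D_1D_2$ with the parameter interval $[r_1/\ell,\,1-r_2/\ell]$ via the affine map $M$, reduce to one-dimensional quantization of the uniform distribution on that interval, invoke the known equispaced optimum (the paper cites \cite{RR2} for this, while you offer an alternative centroid/convexity argument), and then pull back through $M$; the error formula follows by the same symmetry observation that all $n$ Voronoi cells contribute equally. The only cosmetic difference is that you make the isometry-up-to-scale relation $\rho(M(s),M(t))=\ell^2(s-t)^2$ explicit, whereas the paper records the equivalent statement $d\mu=dt$ via the arc-length computation.
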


\begin{figure}
\begin{tikzpicture}[line cap=round,line join=round,>=triangle 45,x=1.0cm,y=1.0cm]
\clip(-4.434877892537599,-0.5065236901434486) rectangle (11.67941711302465,5.21943965200402);
\draw [line width=0.5pt,color=ffqqqq] (-4.,1.)-- (9.,5.);
\draw (-4.331619416758946,1.0466997550116428) node[anchor=north west] {$A(M(0)=a)$};
\draw (8.775247841033814,5.110995388362796) node[anchor=north west] {$B(M(1)=b)$};
\draw (6.616698485521949,4.383174795378465) node[anchor=north west] {$D_2(d_2=M(1-\frac{r_2}{\ell}))$};
\draw (-2.425873138919643,1.624555378619519) node[anchor=north west] {$D_1(d_1=M(\frac{r_1}{\ell}))$};
\draw [line width=0.5 pt,dotted] (0.,0.)-- (9.,5.);
\draw [line width=0.5 pt,dotted] (0.,0.)-- (-4.,1.);
\draw (-0.07285987750580951,0.02995135022337276) node[anchor=north west] {$O$};
\draw (1.7244421366922663,3.055424263641221) node[anchor=north west] {$|AB|=\ell$};
\draw (-4.331619416758946,2.124555378619519) node[anchor=north west] {$|AD_1|=r_1$};
\draw (6.616698485521949,5.383174795378465) node[anchor=north west] {$|BD_2|=r_2$};
\draw [->,line width=0.5pt,dotted] (0.,0.) -- (6.,0.);
\draw [->,line width=0.5pt,dotted] (0.,0.) -- (-3.,0.);
\draw [->,line width=0.5pt,dotted] (0.,0.) -- (0.,4.);
\draw [->,line width=0.5 pt,dotted] (0.,0.) -- (0.02,-.5377855623607878);
\begin{scriptsize}
\draw [fill=xdxdff] (-4.,1.) circle (1.5pt);
\draw [fill=ududff] (9.,5.) circle (1.5pt);
\draw [fill=xdxdff] (-2.0015135135135136,1.614918918918919) circle (1.5pt);
\draw [fill=xdxdff] (7.016972972972972,4.389837837837838) circle (1.5pt);
\draw [fill=xdxdff] (0.,0.) circle (1.5pt);
\end{scriptsize}
\end{tikzpicture}
\caption{ }
\label{Fig1}
\end{figure}

\begin{proof}
Since $\ell$ is the length of the line segment $AB$, the probability density function (pdf) $f$ of the uniform distribution $\mu$ on $AB$ is given by $f(x_1, x_2)=\frac 1{\ell}$ for all $(x_1, x_2)\in AB$, and zero otherwise.
Let $s$ represent the distance of any point on $AB$ from the point $A$. Then, we have $d\mu=d\mu(s)=\mu(ds)=f(x_1, x_2) ds=\frac 1 {\ell} \,ds$. Notice that $ds=\sqrt{(\frac {dx_1}{dt})^2 +(\frac {dx_2}{dt})^2}\,dt=\ell \,dt$ yielding $d\mu=dt$. By the hypothesis, the parametric representation of the line segment $AB$ is $M(t)$ for $0\leq t\leq 1$ with $M(0)=a$ and $M(1)=b$. Hence, the parameters for the points $D_1$ and $D_2$, which are at distances $r_1$ and $r_2$ from $A$ and $B$ are, respectively, given by $t=\frac {r_1}{\ell}$ and $t=1-\frac {r_2}{\ell}$, i.e., if $d_1$ and $d_2$ are the position vectors of the points $D_1$ and $D_2$ (see Figure~\ref{Fig1}), then we have
\[d_1=M(\frac {r_1}{\ell}), \te{ and } d_2=M(1-\frac {r_2}{\ell}).\]
In fact, we can identify the line segment $D_1D_2$ by its parameters in the closed interval $[\frac {r_1}{\ell}, 1-\frac {r_2}{\ell}].$ By \cite{RR2}, we know that the optimal set of $n$-means with respect to an uniform distribution in the closed interval  $[\frac {r_1}{\ell}, 1-\frac {r_2}{\ell}]$ is given by the set
\[\Big \{\frac {r_1}{\ell}+\frac {2j-1}{2n}(1-\frac {r_2}{\ell}-\frac {r_1}{\ell}) : 1\leq j\leq n\Big\}.\]
Hence, the optimal set of $n$-means for $\mu$ on the segment $D_1D_2$, is given by
\[\ga_n(\mu,  \, D_1D_2):=\Big\{M(\frac {r_1}{\ell}+\frac {2j-1}{2n} (1-\frac{r_2}{\ell}-\frac{r_1}{\ell})) : 1\leq j\leq n \Big\}.\]
If $V_n(\mu, \, D_1D_2)$ is the corresponding quantization error, then we have
\begin{align*} V_n(\mu, \, D_1D_2)=n\Big(\te{Quantization error due to the point } M(\frac {r_1}{\ell}+\frac {1}{2n} (1-\frac{r_2}{\ell}-\frac{r_1}{\ell}))\Big).
\end{align*}
Again, notice that any point on the line segment $D_1D_2$ is given by $M(t)$ for $\frac{r_1}{\ell}\leq t\leq 1-\frac{r_2}{\ell}$, and the parameters for the points at which the boundary of the Voronoi region of $M(\frac {r_1}{\ell}+\frac {1}{2n} (1-\frac{r_2}{\ell}-\frac{r_1}{\ell}))$ cuts the segment $D_1D_2$ are given by $t=\frac{r_1}{\ell}$, and $t=\frac {r_1}{\ell}+\frac {1}{n} (1-\frac{r_2}{\ell}-\frac{r_1}{\ell})$. Hence, we have
\[V_n(\mu, \, D_1D_2)=n \int_{\frac {r_1}{\ell}}^{\frac {r_1}{\ell}+\frac {1}{n} (1-\frac{r_2}{\ell}-\frac{r_1}{\ell})}\rho\Big(M(t), M(\frac {r_1}{\ell}+\frac {1}{2n} (1-\frac{r_2}{\ell}-\frac{r_1}{\ell}))\Big) d\mu.\]
Thus, the proof of the theorem is complete.
\end{proof}
Let the equation of the unit circle be $x_1^2+x_2^2=1$. Let $A_1A_2A_3\cdots A_m$ be a regular $m$-sided polygon for some $m\geq 3$ inscribed in the circle. Without any loss of generality due to rotational symmetry, we can always assume that the vertex $A_1$ lies on the $x_1$-axis, i.e., the vertex $A_1$ is the point where the circle intersects the positive direction of the $x_1$-axis. Again, notice that each side of the regular $m$-sided polygon subtends a central angle of radian $\frac {2\pi}m$. Thus, the position vectors $\tilde a_j$ of the vertices $A_j$ are given by $\tilde a_j=(\cos \frac{2 \pi }{m} (j-1), \sin\frac{2 \pi }{m} (j-1))$ for $1\leq j\leq m$. Let $\ell$ be the length of each side of the polygon, then we have
\begin{equation} \label{eq100} \ell=\|\tilde a_m-\tilde a_{m-1}\|=\|\tilde a_{m-1}-\tilde a_{m-2}\|=\cdots=\|\tilde a_2-\tilde a_1\|=2 \sin \frac{\pi }{m}.
\end{equation}
Let $L$ be the boundary of the polygon. Then, we can write
\[L=\UU_{j=1}^m L_j,\]
where $L_j$ is the side $A_jA_{j+1}$, and $A_{m+1}$ is identified as the vertex $A_1$. Then, for $1\leq j\leq m$, we can write
\[L_j:=A_jA_{j+1}=\set{M_j :  0\leq t\leq 1}, \te{ where } M_j=\tl a_{j+1}t+(1-t)\tl a_j.\]
Notice that $M_j$ is a function of $t$, and any point on the side $A_jA_{j+1}$ can be represented by $M_j:=M_j(t)$ for $0\leq t\leq 1$. Thus, we see that $M_j(0)=\tilde a_j$, and $M_j(1)=\tilde a_{j+1}$ for $1\leq j\leq m$.
Let $P$ be the uniform distribution defined on the boundary $L$ of the polygon. Then, the probability density function (pdf) $f$ of the uniform distribution $P$ is given by $f(x_1, x_2)=\frac 1{m\ell}$ for all $(x_1, x_2)\in L$, and zero otherwise.
Let $s$ represent the distance of any point on $L$ from the vertex $A_1$ tracing along the boundary $L$ in the counterclockwise direction. Then, we have $dP=dP(s)=P(ds)=f(x_1, x_2) ds=\frac 1 {m\ell} ds$. For $1\leq j\leq m$, on each $L_j$,  we have $ds=\sqrt{(\frac {dx_1}{dt})^2 +(\frac {dx_2}{dt})^2}|dt|=\ell |dt|$ yielding $dP(s)=\frac \ell{m\ell} |dt|=\frac 1m |dt|$.

\begin{remark} \label{rem1}
Since $P$ is uniform, and a regular $m$-sided polygon has symmetry of order $m$, it is not difficult to show that an optimal set $\ga_m$ will contain $m$ points, each from the interior of the $m$ angles of the regular $m$-sided polygon; and for any positive integer $k\geq 2$, $\ga_{mk}$ will contain $m$ points, each from the interior of the $m$ angles, and $(k-1)$ points from each side of the regular $m$-sided polygon. Moreover, the following is true: Let $A$ be one of the vertices of the regular $m$-sided polygon, and for $k\geq 2$, let $a$ be an element of an optimal set of $mk$-means that lies in the interior of $\angle A$. Further, let $AA_1$ and $AA_2$ be the two adjacent sides of the vertex $A$. Then, the boundary of the Voronoi region of $a$ will cut $AA_1$ and $AA_2$ at two points $D_1$ and $D_2$ such that $|AD_1|=|AD_2|=r$ for some real $r$ such that $0<r\leq \frac {\ell} 2$, where $\ell$ is the length of the sides of the polygon.
\end{remark}
\begin{prop} \label{prop22}
Let $\ga_n$ be an optimal set of $n$-means such that $n=mk$, where $k\in \D N$, and $k\geq 2$. Let $a_j$ be the points that $\ga_n$ contains from the interior of the angles $A_j$ of the regular $m$-sided polygon, $1\leq j\leq m$. Then,
\[\ga_n=\set{a_j : 1\leq j\leq m}\uu \UU_{j=1}^m \ga_{j, k-1},\]
where\begin{align*}
a_1&=(1-\frac{1}{2} r \sin (\frac{\pi }{m}),0),\\
a_j&=(\frac{1}{4} \cos \frac{2 \pi  (j-1)}{m}(r (\cos (\frac{2 \pi }{m})-1) \csc (\frac{\pi }{m})+4), \sin \frac{2 \pi  (j-1)}{m} (\frac{1}{4} r (\cos (\frac{2 \pi }{m})-1) \csc (\frac{\pi }{m})+1))
\end{align*}
for $2\leq j\leq m$, and
$\ga_{j, k-1}:=\set{M_j(\frac r \ell+\frac {2i-1}{2(k-1)}(1-\frac {2r}{\ell})) : 1\leq i\leq k-1}$ for $1\leq j\leq m$, and
\[r=\frac{4 \sin (\frac{\pi }{m})}{2 (k-1) \sqrt{3 \cos ^2(\frac{\pi }{m})+1}+4}.\] Moreover, the quantization error for $n$-means is given by
\begin{align*}
V_n=\frac{2 \sin ^2(\frac{\pi }{m}) (3 \cos (\frac{2 \pi }{m})+5)}{3 \Big(k \sqrt{6 \cos (\frac{2 \pi }{m})+10}-\sqrt{6 \cos (\frac{2 \pi }{m})+10}+4\Big)^2}.
\end{align*}
\end{prop}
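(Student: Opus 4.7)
The plan is to exploit the two symmetries of the setup (rotation by $2\pi/m$ and reflection across each angle bisector) together with Theorem~\ref{theo00} and Remark~\ref{rem1} to reduce everything to one vertex and one unknown scalar $r$. By Remark~\ref{rem1}, $\ga_n$ already has the claimed shape: $m$ ``corner points'' $a_j$ together with $k-1$ interior points on each side, and the Voronoi region of $a_j$ cuts both adjacent sides at equal distance $r$ from $A_j$. Rotational symmetry allows us to pin down only $a_1$ and the $k-1$ points on $L_1$; the rest follow by applying the rotation of angle $2\pi(j-1)/m$.

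First I would locate $a_1$. Reflective symmetry across the $x_1$-axis forces $a_1$ onto that axis. Since $P$ is uniform and the Voronoi region of $a_1$ on $L$ is the union of the two length-$r$ arcs $A_1D_1\subset L_1$ and $D_2A_1\subset L_m$, the centroid $a_1=E(X\mid X\in M(a_1\mid\ga_n))$ equals
\[a_1=A_1+\tfrac{r}{4}(\hat u_1+\hat u_2),\]
where $\hat u_1,\hat u_2$ are the unit vectors from $A_1$ along $L_1$ and $L_m$. A short computation gives $\hat u_1+\hat u_2=(-2\sin(\pi/m),0)$, hence $a_1=(1-\tfrac{r}{2}\sin(\pi/m),0)$, and the stated formula for $a_j$ follows by rotation. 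On each side $L_j$ the $k-1$ interior centroids form the optimal $(k-1)$-means for the uniform distribution restricted to the sub-segment from distance $r$ to $\ell-r$, so Theorem~\ref{theo00} with $r_1=r_2=r$ and the parametrization $M_j$ immediately yields $\ga_{j,k-1}$.

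Next I would fix $r$ by a canonical equation on $L_1$. Let $b_1:=M_1\bigl(\tfrac{r}{\ell}+\tfrac{1}{2(k-1)}(1-\tfrac{2r}{\ell})\bigr)$ be the first interior centroid on $L_1$. Its Voronoi boundary with $a_1$ must meet $L_1$ at $D_1=M_1(r/\ell)$, so
\[\rho(D_1,a_1)=\rho(D_1,b_1).\]
Plugging in the already-found $a_1$, the left-hand side becomes $\tfrac{r^2}{4}(1+3\cos^2(\pi/m))$ and the right-hand side becomes $\bigl((\ell-2r)/(2(k-1))\bigr)^2$. Taking positive square roots and solving produces $r=\ell/[(k-1)\sqrt{1+3\cos^2(\pi/m)}+2]$, which via $\ell=2\sin(\pi/m)$ and the identity $4(1+3\cos^2(\pi/m))=6\cos(2\pi/m)+10$ rewrites as the stated formula.

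Finally I would compute $V_n$ as a sum of $m$ corner contributions and $m(k-1)$ interior contributions. Direct integration on the two length-$r$ arcs about $A_1$, using $dP=\tfrac{1}{m\ell}\,ds$, gives a per-corner value $\tfrac{r^3(1+3\cos^2(\pi/m))}{6m\ell}$, and each interior sub-segment of length $h=(\ell-2r)/(k-1)$ with midpoint centroid contributes $h^3/(12m\ell)$. Summing and then substituting $\ell-2r=r(k-1)\sqrt{1+3\cos^2(\pi/m)}$ (a rearrangement of the formula for $r$) collapses the two-term expression to the single term $V_n=\tfrac{r^2}{12}(1+3\cos^2(\pi/m))$; inserting $r$ and converting to $\cos(2\pi/m)$-variables yields the stated value. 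The one real obstacle is this final algebraic telescoping—the rest is a direct application of Theorem~\ref{theo00} and Remark~\ref{rem1}.
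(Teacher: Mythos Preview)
Your argument is correct and follows the paper's overall architecture: invoke Remark~\ref{rem1} to fix the combinatorial shape of $\ga_n$, compute the corner centroid $a_1$ from the two length-$r$ arms (the paper does this by directly integrating $M_m$ and $M_1$; your unit-vector shortcut gives the same answer), obtain the interior points from Theorem~\ref{theo00}, and then push through to $V_n$.

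The one genuine difference is how you pin down $r$. The paper first assembles $V_n$ as an explicit function of $r$, namely
\[
V_n(r)=\frac{1}{24}\csc\!\Big(\frac{\pi}{m}\Big)\Big[r^3\big(3\cos\tfrac{2\pi}{m}+5\big)+\frac{8}{(k-1)^2}\big(\sin\tfrac{\pi}{m}-r\big)^3\Big],
\]
and solves $\partial V_n/\partial r=0$. You instead impose the canonical equation $\rho(D_1,a_1)=\rho(D_1,b_1)$ at the Voronoi interface on $L_1$. These are equivalent---the stationarity condition reduces exactly to $r\sqrt{1+3\cos^2(\pi/m)}=(\ell-2r)/(k-1)$, which is your equidistance equation---but it is worth noting that you are actually using the ``canonical equation'' machinery the paper sets up in its preliminaries yet never deploys in this proof. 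Your route also yields the pleasant closed form $V_n=\tfrac{r^2}{12}\bigl(1+3\cos^2(\pi/m)\bigr)$ before substituting $r$, which is a little cleaner than the paper's direct substitution into the two-term expression.
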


\begin{proof}
Let $\ga_n$ be an optimal set of $n$-means, where $n=mk$ for some positive integer $k\geq 2$. Since $a_j$ are the points that $\ga_n$ contains from the interior of the angles $A_j$, by Remark~\ref{rem1}, due to uniform distribution and symmetry, we can say that there exists a real number $r$, where $0<r\leq \frac {\ell} 2$, such that the boundary of the Voronoi region of each $a_j$ will cut the two adjacent sides at distances $r$ from the vertex $A_j$. Notice that the two adjacent sides of the vertex $A_1$ are $A_mA_1$ and $A_1A_2$ in the polygon. Again, by the hypothesis $a_1$ is the point that $\ga_n$ contains from $\angle A_1$. If the boundary of the Voronoi region of $a_1$ cuts $A_mA_1$ and $A_1A_2$ at $D_1$ and $D_2$, respectively, we have
\[a_1=E (X : X\in D_1A_1\uu A_1D_2)=\frac {\int_{D_1A_1}(x_1, x_2) dP+\int_{A_1D_2}(x_1, x_2) dP}{\int_{D_1A_1}1 dP+\int_{A_1D_2}1 dP},\]
which implies
\[a_1=\frac{\int_{1-\frac{r}{L}}^1 M_n(t) \, dt+\int_0^{\frac{r}{L}} M_1(t) \, dt}{\int_0^{\frac{r}{L}} 1 \, dt+\int_{1-\frac{r}{L}}^1 1 \, dt}=(1-\frac{1}{2} r \sin \left(\frac{\pi }{m}\right),0).\]
Similarly, for $2\leq j\leq m$, we obtain
\[a_j=\frac{\int_{1-\frac{r}{L}}^1 M_{j-1}(t) \, dt+\int_0^{\frac{r}{L}} M_j(t) \, dt}{\int_0^{\frac{r}{L}} 1 \, dt+\int_{1-\frac{r}{L}}^1 1 \, dt}\]
yielding
\[a_j=(\frac{1}{4} \cos \frac{2 \pi  (j-1)}{m}(r (\cos (\frac{2 \pi }{m})-1) \csc (\frac{\pi }{m})+4), \sin \frac{2 \pi  (j-1)}{m} (\frac{1}{4} r (\cos (\frac{2 \pi }{m})-1) \csc (\frac{\pi }{m})+1)).\]
Again, by Remark~\ref{rem1}, $\ga_n$ contains $(k-1)$ points from each side of the regular $m$-sided polygon. For $1\leq j\leq m$, let $\ga_{j, k-1}$ be the optimal set of $(k-1)$-means that $\ga_n$ contains from the side $A_jA_{j+1}$. Recall that the parametric representation of the side $A_jA_{j+1}$ is $M_j(t)$, and the $(k-1)$ means from each side occur due to an uniform distribution on the segment bounded by the two points represented by the parameters $t=\frac{r}{\ell}$ and $t=1-\frac{r}{\ell}$. Hence, by Theorem~\ref{theo00}, we have
\[\ga_{j, k-1}:=\Big\{M_j(\frac r \ell+\frac {2i-1}{2(k-1)}(1-\frac {2r}{\ell})) : 1\leq i\leq k-1\Big\}.\]
To calculate the quantization error, we proceed as follows:
By symmetry, the quantization error contributed by all the points $a_j$ for $1\leq j\leq m$ is given by
\begin{align*} m \int_{D_1A_1\uu A_1D_2}\rho((x_1, x_2), a_1) dP&=2 m\int_{A_1D_2}\rho((x_1, x_2), a_1) dP=2 \int_{0}^{\frac {r}{\ell}}\rho(M_1(t), a_1) dt,
\end{align*}
implying
\begin{equation} \label{eq1}  m \int_{D_1A_1\uu A_1D_2}\rho((x_1, x_2), a_1) dP=\frac{1}{24} r^3 (3 \cos (\frac{2 \pi }{m})+5) \csc (\frac{\pi }{m}).
\end{equation}
Again, by Theorem~\ref{theo00}, the quantization error contributed by all the sets $\ga_{j, k-1}$ for $1\leq j\leq m$ is given by
\[m V_n(\mu, \, \ga_{j, k-1}):=(k-1)\int_{\frac {r}{\ell}}^{\frac {r}{\ell}+\frac {1}{k-1} (1-\frac{2r}{\ell})}\rho\Big(M(t), M(\frac {r}{\ell}+\frac {1}{2(k-1)} (1-\frac{2r}{\ell}))\Big) dt.\]
implying
\begin{equation} \label{eq2} m V_n(\mu, \, \ga_{j, k-1})=\frac 1{3(k-1)^2}\csc (\frac{\pi }{m}) (\sin (\frac{\pi }{m})-r)^3.
\end{equation}
Hence, by \eqref{eq1} and \eqref{eq2}, the quantization error for $n$-means is given by
\begin{align*}
V_n= \frac{1}{24} \csc (\frac{\pi }{m})\Big(r^3 (3 \cos(\frac{2 \pi }{m})+5)+\frac 8{(k-1)^2}(\sin(\frac{\pi }{m})-r)^3\Big).
\end{align*}
Notice that for a given $k$, the quantization error $V_n$ is a function of $r$. Solving $\frac{\pa V_n}{\pa r}=0$, we have $r=\frac{4 \sin (\frac{\pi }{m})}{2 (k-1) \sqrt{3 \cos ^2(\frac{\pi }{m})+1}+4}$. Putting $r=\frac{4 \sin (\frac{\pi }{m})}{2 (k-1) \sqrt{3 \cos ^2(\frac{\pi }{m})+1}+4}$, we have
\[V_n=\frac{2 \sin ^2(\frac{\pi }{m}) (3 \cos (\frac{2 \pi }{m})+5)}{3 \Big(k \sqrt{6 \cos (\frac{2 \pi }{m})+10}-\sqrt{6 \cos (\frac{2 \pi }{m})+10}+4\Big)^2}.\]
Thus, the proof the proposition is complete.
\end{proof}

Let us now prove the following theorem.

\begin{theorem}\label{theo01}
Let $P$ be the uniform distribution on the boundary of a regular $m$-sided polygon inscribed in a unit circle. Then, the quantization coefficient for $P$ exists as a finite positive number which equals $\frac{1}{3} m^2 \sin ^2(\frac{\pi }{m})$, i.e.,
$\lim\limits_{n\to \infty} n^2 V_n=\frac{1}{3} m^2 \sin ^2(\frac{\pi }{m}).$
\end{theorem}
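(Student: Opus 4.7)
The plan is to first establish the theorem along the arithmetic subsequence $n=mk$, where Proposition~\ref{prop22} gives us an exact closed form for $V_n$, and then extend to all $n$ by a standard sandwich argument based on the monotonicity of the quantization error.

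For the first step, I would start from the formula
\[V_{mk}=\frac{2 \sin ^2(\frac{\pi }{m}) (3 \cos (\frac{2 \pi }{m})+5)}{3 \bigl(k \sqrt{6 \cos (\frac{2 \pi }{m})+10}-\sqrt{6 \cos (\frac{2 \pi }{m})+10}+4\bigr)^2}\]
supplied by Proposition~\ref{prop22}, multiply by $n^2=m^2k^2$, and extract the leading $k$-behaviour of the denominator by writing it as $(k-1)\sqrt{6\cos(2\pi/m)+10}+4$, which is asymptotic to $k\sqrt{6\cos(2\pi/m)+10}$. This immediately yields
\[\lim_{k\to\infty}(mk)^2 V_{mk}=\frac{m^2\cdot 2\sin^2(\frac{\pi}{m})(3\cos(\frac{2\pi}{m})+5)}{3(6\cos(\frac{2\pi}{m})+10)}.\]
The simplification to $\tfrac13 m^2\sin^2(\pi/m)$ then follows from the identity $6\cos(2\pi/m)+10=2(3\cos(2\pi/m)+5)$, so the factor $3\cos(2\pi/m)+5$ cancels cleanly. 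This is a short, purely computational step.

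The main conceptual step is passing from the subsequence $n=mk$ to arbitrary $n$. I would write any $n\geq 2m$ in the form $n=mk+s$ with $0\leq s<m$, and use the elementary fact that $V_n$ is non-increasing in $n$ (any optimal set of $n$-means can be enlarged to a feasible set of $(n+1)$-means). Together with $mk\leq n\leq m(k+1)$, this gives the two-sided bound
\[(mk)^2\, V_{m(k+1)}\;\leq\; n^2 V_n\;\leq\; \bigl(m(k+1)\bigr)^2\, V_{mk}.\]
Both outer expressions can be rewritten as $\bigl(\tfrac{mk}{m(k+1)}\bigr)^2\cdot (m(k+1))^2 V_{m(k+1)}$ and $\bigl(\tfrac{m(k+1)}{mk}\bigr)^2\cdot (mk)^2 V_{mk}$, respectively; the prefactors tend to $1$ as $k\to\infty$, and the other factors tend to $\tfrac13 m^2\sin^2(\pi/m)$ by the first step. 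The squeeze theorem then delivers the full limit $\lim_{n\to\infty}n^2V_n=\tfrac13 m^2\sin^2(\pi/m)$.

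The single potential obstacle is making the sandwich argument airtight, but since monotonicity of $V_n$ in $n$ is a general fact about quantization errors (and is mentioned in the standard references cited earlier, such as \cite{GL1}), no additional technical work is needed. The trigonometric simplification in the first step is the only place where care is required, and it is essentially automatic once one observes the $6\cos(2\pi/m)+10=2(3\cos(2\pi/m)+5)$ factorization.
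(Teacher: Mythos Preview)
Your proposal is correct and follows essentially the same route as the paper: both arguments sandwich $n^2V_n$ between $(mk)^2V_{m(k+1)}$ and $(m(k+1))^2V_{mk}$ using the monotonicity of $V_n$ and then invoke the closed form from Proposition~\ref{prop22} to show the two bounds share the limit $\tfrac13 m^2\sin^2(\pi/m)$. The only cosmetic difference is that you first isolate the subsequence limit and then apply the prefactor trick, whereas the paper computes the limits of the two outer expressions directly; the content is identical.
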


\begin{proof} Let $n\in \D N$ be such that $n\geq 2m$. Then, there exists a unique positive integer $\ell(n)\geq 2$ such that $m\ell(n)\leq n<m(\ell(n)+1)$. Then,
\begin{equation} \label{eq46} (m\ell(n))^2V_{m(\ell(n)+1)}<n^2 V_n<(m(\ell(n)+1))^2V_{m\ell(n)}.
\end{equation}
We have
\begin{align*}
&\lim_{n \to \infty} (m\ell(n))^2V_{m(\ell(n)+1)}\\
&=\underset{\ell(n)\to \infty }{\text{lim}}(m \ell(n))^2 \frac{2 \sin ^2(\frac{\pi }{m}) (3 \cos (\frac{2 \pi }{m})+5)}{3 \Big((\ell(n)+1) \sqrt{6 \cos (\frac{2 \pi }{m})+10}-\sqrt{6 \cos (\frac{2 \pi }{m})+10}+4\Big)^2}=\frac{1}{3} m^2 \sin ^2(\frac{\pi }{m}),\end{align*}
and
\begin{align*}
&\lim_{n \to \infty} (m(\ell(n)+1))^2V_{m\ell(n)}\\
&=\underset{\ell(n)\to \infty }{\text{lim}}(m(\ell(n)+1))^2\frac{2 \sin ^2(\frac{\pi }{m}) (3 \cos (\frac{2 \pi }{m})+5)}{3 \Big(\ell(n) \sqrt{6 \cos (\frac{2 \pi }{m})+10}-\sqrt{6 \cos (\frac{2 \pi }{m})+10}+4\Big)^2}=\frac{1}{3} m^2 \sin ^2(\frac{\pi }{m}),
\end{align*}
and hence, by \eqref{eq46} we have
$\mathop{\lim}\limits_{n\to\infty} n^2 V_n=\frac{1}{3} m^2 \sin ^2(\frac{\pi }{m})$, i.e., the quantization coefficient exists as a finite positive number which equals $\frac{1}{3} m^2 \sin ^2(\frac{\pi }{m})$.
Thus, the proof of the theorem is complete.
\end{proof}

\begin{remark} Since  $\lim\limits_{m\to \infty}2 \sin \frac{\pi }{m}=0$, by \eqref{eq100}, we can conclude that when $m$ tends to $\infty$, then the length of each side of the regular $m$-sided polygon becomes zero, i.e., the regular $m$-sided polygon coincides with the circle. Moreover, for $m\geq 3$, we have
 \[\frac{d}{dm}(\frac{1}{3} m^2 \sin ^2(\frac{\pi }{m}))=\frac{2}{3} \sin (\frac{\pi }{m}) (m \sin (\frac{\pi }{m})-\pi  \cos (\frac{\pi }{m}))>0\]
yielding the fact that the quantization coefficient $\frac{1}{3} m^2 \sin ^2(\frac{\pi }{m})$ for the uniform distribution on the boundary of the regular $m$-sided polygon is an increasing function of $m$. Again,
 \[\lim_{m\to\infty}\frac{1}{3} m^2 \sin ^2(\frac{\pi }{m})=\frac{\pi ^2}{3},\]
 i.e., when $m$ tends to infinity, then the quantization coefficient of the regular $m$-sided polygon equals $\frac{\pi ^2}{3}$.
 Recall that $\frac{\pi ^2}{3}$ is the quantization coefficient for the uniform distribution on the unit circle. Thus, the result in this paper, proves the conjecture given by Pena et al. in the paper \cite{PRRSS}.
 \end{remark}

 \begin{remark}
 If $m=6$, we see that $\lim\limits_{n\to \infty} n^2V_n=3$, which is the quantization coefficient for the uniform distribution on the boundary of a hexagon inscribed in a unit circle. Thus, the result in this paper, also generalizes a result given by Pena et al. in the paper \cite{PRRSS}.
 \end{remark}

\end{document}